\newcommand{\rtn}{\mathrm{\mathbf{R}}}
\newcommand*{\PR}{\mathrm{\mathbf{P}}}
\newcommand*{\EX}{\mathrm{\mathbf{E}}}
\newcommand*{\dif}{\,\mathrm{d}}
\newcommand*{\calF}{\mathcal{F}}
\newcommand*{\calL}{\mathcal{L}}
\newcommand{\calS}{\mathcal{S}}
\newcommand{\calH}{\mathcal{H}}
\newcommand{\calU}{\mathcal{U}}
\newcommand*{\prs}{\PR\text{\,--\,\,}a.s.}
\newcommand*{\pts}{\dif\PR\!\times\!\!\dif t\text{\,--\,\,}a.e.}
\newcommand{\tT}{[0,T]}
\newcommand{\intT}[2][T]{\int^{#1}_{#2}}
\newcommand{\one}[1]{{\bf 1}_{#1}}
\newcommand{\vp}{\varepsilon}
\DeclareMathOperator*{\esssup}{esssup}
\def\ps@pprintTitle{%
	\let\@oddhead\@empty
	\let\@evenhead\@empty
	\def\@oddfoot{\footnotesize\itshape		\hfill\today}%
	\let\@evenfoot\@oddfoot}
\crefname{thm}{Theorem}{Theorems}
\crefname{pro}{Proposition}{Propositions}
\crefname{lem}{Lemma}{Lemmas}
\crefname{rmk}{Remark}{Remarks}
\crefname{cor}{Corollary}{Corollaries}
\crefname{dfn}{Definition}{Definitions}
\crefname{ex}{Example}{Examples}
\crefname{section}{Section}{Sections}
\crefname{subsection}{Subsection}{Subsections}
\newtheorem{thm}{Theorem}
\newtheorem{rmk}[thm]{Remark}
\newtheorem{dfn}[thm]{Definition}
\journal{}
\begin{document}
	
\begin{frontmatter}

\title{{\boldmath\bf Probabilistic interpretation of HJB equations by the representation theorem for generators of BSDEs}\tnoteref{found}}
\tnotetext[found]{Supported by the National Natural Science Foundation of China (Nos.\;11371362 and 11601509) and the Natural Science Foundation of Jiangsu Province (No.\;BK20150167).}

\author{Lishun Xiao\corref{cor1}}
\ead{xiaolishun@cumt.edu.cn}
\cortext[cor1]{Corresponding author}
\author{Shengjun Fan}%
\author{Dejian Tian}%


\address{School of Mathematics, China University of Mining and Technology, Xuzhou, Jiangsu, 221116, P.R. China}

\begin{abstract}
  The purpose of this note is to propose a new  approach for the probabilistic interpretation of Hamilton-Jacobi-Bellman equations associated with stochastic recursive optimal control problems,  utilizing the representation theorem for generators of backward stochastic differential equations. The key idea of our approach for proving this interpretation consists of transmitting the signs between the solution and generator via the identity given by representation theorem. Compared with existing methods, our approach seems to be more applicable for general settings. This can also be regarded as a new application of such representation theorem.
\end{abstract}

\begin{keyword}
Backward stochastic differential equation\sep
Recursive optimal control problem\sep
Hamilton-Jacobi-Bellman equation\sep
Representation theorem for generator
\MSC[2010] 60H10\sep 35K20\sep 49L25 
\end{keyword}
\end{frontmatter}

\section{Introduction}
\label{sec:Introduction}
After the pioneering work on nonlinear backward stochastic differential equations (BSDEs for abbreviation) by \citet*{PardouxPeng1990SCL}, the theory of nonlinear BSDEs has been applied to many fields (see \citet*{ElKarouiPengQuenez1997MF} for details). An important application of BSDEs lies in stochastic control problem. \citet*{Peng1992SSR} first interpreted the viscosity solution of a generalized Hamilton-Jacobi-Bellman equation as the value function of a stochastic recursive optimal control problem which is described by a forward-backward SDE (FBSDE for short). \citet*{Peng1997SuiJiFenXiXuanJiang} introduced a notion of backward semigroups to demonstrate a generalized dynamic programming principle (DPP for short) for stochastic recursive optimal control problem. He also provided a method of approximation of BSDEs' solutions to prove the probabilistic interpretation for HJB equations, i.e., the value function is a viscosity solution of the HJB equation. In this method several BSDEs with different generators and some estimates for solutions are applied to obtain the required variational inequality (see \eqref{eq:ViscositySubsoultionInequality} or \cref{dfn:ViscositySolutionsOfHJBEquations}). 

Since then, many researchers began to investigate stochastic recursive optimal control problem induced by FBSDE systems. \citet*{BuckdahnLi2008SICON} studied zero-sum two-player stochastic differential games via FBSDEs;
recently, \citet*{BuckdahnNie2016SICON} considered a stochastic exit time optimal control problem. All the previous works manifested the probabilistic interpretation for corresponding HJB equations with Cauchy problems or Dirichlet boundary conditions by Peng's approximation method with some necessary technical modifications. Under a non-Lipschitz setting, \citet*{PuZhang2015arXiv} proved the probabilistic interpretation for HJB equations by the approximation of viscosity solution sequence.  

Above all, to our best knowledge, the existing methods to process the probabilistic interpretation of HJB equations are all based on the approximation of (BSDEs' or PDEs') solutions. In this note, we would like to propose a new and unified approach to treat this probabilistic interpretation utilizing the representation theorem for generators of BSDEs (see \cref{thm:RepresentationTheoremForGenerators}). This representation theorem was established by \citet*{BriandCoquetHuMeminPeng2000ECP} and further extended by \citet*{Jiang2005SPA,Jiang2008AAP}. Essentially, the very crucial step of proving the viscosity solution is to claim a variational inequality (see \eqref{eq:ViscositySubsoultionInequality} or \cref{dfn:ViscositySolutionsOfHJBEquations}), the left hand side (without the sup) of which is actually a generator of a BSDE. 
The novelty of our approach is that the signs of the required generator and the BSDE's solution inherit directly form each other via an identity given by the representation theorem. Moreover, by our approach we can observe that the probabilistic interpretation can be boiled down to the representation problem for generators of a BSDE, provided the DPP holds. So compared with existing methods, the representation theorem approach is more applicable to general frameworks. And this can also be seen as a new application of such representation theorem.

The rest of this paper is organized as follows: \cref{sec:Preliminaries} gives all necessary notations and some elementary results about BSDEs; \cref{sec:ProbabilisticInterpretationHJBEquations} illustrates the probabilistic interpretation for HJB equations in viscosity sense adopting the representation theorem for generators of BSDEs.

\section{Preliminaries}\label{sec:Preliminaries}
Let $T>0$ be a given finite time horizon, $(\Omega,\calF,\PR)$ a probability space carrying a standard $d$-dimensional Brownian motion $(B_t)_{t\geq 0}$ and $(\calF_t)_{t\geq 0}$ the natural $\sigma$-algebra filtration generated by $(B_t)_{t\geq 0}$ with $\calF_0$ containing all $\PR$-null sets of $\calF$. Postulate that $\calF_T=\calF$ and $(\calF_t)_{t\geq 0}$ satisfies the usual conditions. Throughout this note we use $|\cdot|$ and $\langle \cdot,\cdot\rangle$ to denote the Euclidean norm and dotproduct, respectively. The Euclidean norm of a matrix $z\in\rtn^{n\times d}$ will be denoted by $|z|:=\sqrt{Tr(zz^*)}$, where and hereafter $z^*$ represents the transpose of $z$. 
We denote by $\calS^2(0,T;\rtn)$ (or $\calS^2$ for brevity) the set of real valued, $(\calF_t)$-adapted and continuous processes $(y_t)_{t\in\tT}$ such that $\EX[\sup_{t\in\tT}|y_t|^2]<\infty$. Let $\calH^2(0,T;\rtn^n)$ (or $\calH^2$ for brevity) denote the set of $\rtn^n$-valued and $(\calF_t)$-progressively measurable processes $(z_t)_{t\in\tT}$ satisfying that $\EX[\intT{0}|z_s|^2\dif s]<\infty$. 

Next we introduce some elementary results about BSDEs of the following type:
\begin{equation*}
  Y_t=\xi+\int^T_tg(\omega,s,Y_s,Z_s)\dif s-\int^T_t\langle Z_s,\dif B_s\rangle,\quad t\in\tT.
\end{equation*}
If we assume that the terminal data $\xi$ is $\calF_T$-measurable and $\EX|\xi|^2<\infty$, the generator $g:\Omega\times\tT\times\rtn\times\rtn^d\mapsto\rtn$ is $(\calF_t)$-progressively measurable and satisfies 
\begin{enumerate}
	\renewcommand{\theenumi}{(A\arabic{enumi})}
	\renewcommand{\labelenumi}{\theenumi}
	\item\label{A:GSquareIntegrable} $\{g(\omega,t,0,0)\}_{t\in\tT}\in\calH^2$;
	\item\label{A:GLipschitz} There exists a constant $K\geq 0$ such that $\pts$, for each $y$, $y'\in\rtn$ and $z$, $z'\in\rtn^d$,
	\[|g(\omega,t,y,z)-g(\omega,t,y',z')|\leq K(|y-y'|+|z-z'|),\]
\end{enumerate}
then by the result of \citet*{PardouxPeng1990SCL} the previous BSDE admits a unique solution $(Y_t,Z_t)_{t\in\tT}$ in $\calS^2\times\calH^2$. 

We now proceed to introduce the representation theorem for generators of BSDEs. Assume that the generator $g$ satisfies \ref{A:GSquareIntegrable} -- \ref{A:GLipschitz} and fix a triplet $(t,y,z)\in[0,T)\times\rtn\times\rtn^d$. Then, for each $\vp$ with $0<\vp\leq T-t$, the following BSDE admits a unique solution $(Y^\vp_s,Z^\vp_s)_{s\in[0,t+\vp]}$ in $\calS^2\times\calH^2$:
\begin{equation}\label{eq:BSDERepresentationThm}
  Y^\vp_s=y+\langle z,B_{t+\vp}-B_t\rangle+\int^{t+\vp}_sg(\omega,r,Y^{\vp}_r,Z^{\vp}_r)\dif r-\int^{t+\vp}_s\langle Z^{\vp}_r,\dif B_r\rangle,\quad s\in[0,t+\vp].
\end{equation}
\begin{thm}[Theorem 3.3 in \citet*{Jiang2005SPA} and Lemma 2.1 in \citet*{Jiang2008AAP}]\label{thm:RepresentationTheoremForGenerators}
	Assume that $g$ satisfies \ref{A:GSquareIntegrable} -- \ref{A:GLipschitz} and $1\leq p<2$. Then for each $(t,y,z)\in[0,T)\times\rtn\times\rtn^d$,
	\begin{equation}\label{eq:RepThmWithoutLebesgue}
	  L^p-\lim_{\vp\to0^+}\frac{1}{\vp}\EX\bigg[(Y^\vp_t-y)-\int^{t+\vp}_tg(\omega,r,y,z)\dif r\bigg|\calF_t\bigg]=0;
	\end{equation} 
	and for each $(y,z)\in\rtn\times\rtn^d$, the following equality:
	\begin{equation}\label{eq:RepThmWithExplicitGenerator}
	  g(\omega,t,y,z)=L^p-\lim_{\vp\to0^+}\frac{1}{\vp}\big(Y^\vp_t-y\big)
	\end{equation}
	holds for almost every $t\in[0,T)$, where $Y^\vp_t$ is the solution of BSDE \eqref{eq:BSDERepresentationThm}. Moreover, if $g(\omega,\cdot,y,z)$ is continuous, the latter equality holds for all $t\in[0,T)$.
\end{thm}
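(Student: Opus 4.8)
The plan is to anchor everything on the identity obtained by evaluating the BSDE \eqref{eq:BSDERepresentationThm} at $s=t$ and conditioning on $\calF_t$. Since $Y^\vp_t$ is $\calF_t$-measurable, the increment $\langle z,B_{t+\vp}-B_t\rangle$ has zero conditional mean and the stochastic integral is a martingale, so
$$Y^\vp_t-y=\EX\bigg[\int^{t+\vp}_tg(\omega,r,Y^\vp_r,Z^\vp_r)\dif r\,\bigg|\,\calF_t\bigg].$$
Subtracting $\int_t^{t+\vp}g(\omega,r,y,z)\dif r$ and invoking \ref{A:GLipschitz}, the bracket in \eqref{eq:RepThmWithoutLebesgue} is dominated in absolute value by $\tfrac{K}{\vp}\EX[\int_t^{t+\vp}(|Y^\vp_r-y|+|Z^\vp_r-z|)\dif r\,|\,\calF_t]$. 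Because conditional expectation is an $L^p$-contraction and $\|\cdot\|_{L^p}\le\|\cdot\|_{L^2}$ on a probability space for $1\le p<2$, it suffices to prove the $L^2$-bound
$$\frac{1}{\vp^2}\EX\bigg[\bigg(\int^{t+\vp}_t\big(|Y^\vp_r-y|+|Z^\vp_r-z|\big)\dif r\bigg)^2\bigg]\longrightarrow0.$$

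For this I would compare $(Y^\vp,Z^\vp)$ with the explicit pair $(U^\vp,z)$ that solves \eqref{eq:BSDERepresentationThm} with the frozen generator $g(\omega,r,y,z)$, namely $U^\vp_s=y+\langle z,B_s-B_t\rangle+\int_s^{t+\vp}g(\omega,r,y,z)\dif r$. Writing $(\hat Y,\hat Z)=(Y^\vp-U^\vp,Z^\vp-z)$, the difference solves a BSDE with zero terminal condition and driver $\delta g_r:=g(\omega,r,Y^\vp_r,Z^\vp_r)-g(\omega,r,y,z)$, so the standard a priori estimate yields $\EX[\sup_{s}|\hat Y_s|^2]+\EX\int_t^{t+\vp}|\hat Z_r|^2\dif r\le C\,\EX[(\int_t^{t+\vp}|\delta g_r|\dif r)^2]$. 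Combining the square-integrability $g(\omega,\cdot,y,z)\in\calH^2$ (a consequence of \ref{A:GSquareIntegrable}--\ref{A:GLipschitz}), the Burkholder--Davis--Gundy inequality and the absolute continuity of $\vp\mapsto\EX\int_t^{t+\vp}|g(\omega,r,y,z)|^2\dif r$, one checks $\EX[\sup_s|U^\vp_s-y|^2]=O(\vp)$. Setting $\alpha^2(\vp)=\EX[\sup_s|Y^\vp_s-y|^2]$ and $\beta^2(\vp)=\EX\int_t^{t+\vp}|Z^\vp_r-z|^2\dif r$, the Lipschitz bound on $\delta g_r$ together with Cauchy--Schwarz gives the coupled system $\alpha^2(\vp)\lesssim \vp+\vp^2\alpha^2(\vp)+\vp\beta^2(\vp)$ and $\beta^2(\vp)\lesssim\vp^2\alpha^2(\vp)+\vp\beta^2(\vp)$; absorbing the small-$\vp$ feedback produces $\alpha^2(\vp)=O(\vp)$ and $\beta^2(\vp)=o(\vp)$. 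Feeding these back, the $Y$-contribution is controlled by $\alpha^2(\vp)\to0$ and the $Z$-contribution by $\beta^2(\vp)/\vp\to0$, which proves \eqref{eq:RepThmWithoutLebesgue}.

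To pass from \eqref{eq:RepThmWithoutLebesgue} to \eqref{eq:RepThmWithExplicitGenerator} I would note that \eqref{eq:RepThmWithoutLebesgue} reads $\frac{1}{\vp}(Y^\vp_t-y)-\frac{1}{\vp}\EX[\int_t^{t+\vp}g(\omega,r,y,z)\dif r\,|\,\calF_t]\to0$ in $L^p$, so it remains to show $\frac{1}{\vp}\EX[\int_t^{t+\vp}\phi_r\dif r\,|\,\calF_t]\to\phi_t$ in $L^p$, where $\phi_r:=g(\omega,r,y,z)$. Since $\phi_t$ is $\calF_t$-measurable, the contraction property reduces this to $\frac{1}{\vp}\int_t^{t+\vp}\|\phi_r-\phi_t\|_{L^p}\dif r\to0$, i.e.\ to $t$ being a Lebesgue point of the Bochner-integrable map $r\mapsto\phi_r\in L^p(\Omega,\calF_T,\PR)$; by the vector-valued Lebesgue differentiation theorem this holds for almost every $t\in[0,T)$, yielding \eqref{eq:RepThmWithExplicitGenerator}. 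If moreover $g(\omega,\cdot,y,z)$ is continuous, dominated convergence (with the bound from \ref{A:GSquareIntegrable}--\ref{A:GLipschitz}) makes $r\mapsto\phi_r$ continuous into $L^p$, so every $t$ is a Lebesgue point and the equality holds for all $t\in[0,T)$.

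The main obstacle is the refined control of the martingale part, namely establishing $\beta^2(\vp)=o(\vp)$: the crude a priori estimate only bounds $\beta^2(\vp)$ in terms of the driver, which itself feeds back $\beta^2(\vp)$, so one must run the coupled bootstrap between $\alpha^2$ and $\beta^2$ and absorb the $O(\vp)$-small terms for $\vp$ small. The second delicate point is the almost-everywhere (rather than everywhere) nature of the Lebesgue-point argument in the final step; this is exactly what forces the ``almost every $t$'' qualifier and is upgraded to ``all $t$'' only under continuity of $g$ in time.
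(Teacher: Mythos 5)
The paper does not prove this theorem at all --- it is imported verbatim from the cited references (Theorem 3.3 of Jiang 2005, Lemma 2.1 of Jiang 2008, going back to Briand--Coquet--Hu--M\'emin--Peng 2000) --- so there is no internal proof to compare against; your argument is essentially the standard proof from those sources (the conditional-expectation identity $Y^\vp_t-y=\EX[\int_t^{t+\vp}g(r,Y^\vp_r,Z^\vp_r)\dif r\,|\,\calF_t]$, a priori estimates yielding $\EX[\sup_s|Y^\vp_s-y|^2]=O(\vp)$ and $\EX\int_t^{t+\vp}|Z^\vp_r-z|^2\dif r=o(\vp)$ via the comparison with the frozen-generator solution $(U^\vp,z)$ and the small-$\vp$ absorption, followed by a Bochner--Lebesgue point argument for $r\mapsto g(\cdot,r,y,z)\in L^p$), and it is correct. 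The only soft spot is the final ``all $t$'' upgrade: pathwise continuity of $g(\omega,\cdot,y,z)$ plus \ref{A:GSquareIntegrable} does not by itself furnish an integrable dominating function for $\sup_{r}|g(\omega,r,y,z)|$, so the dominated-convergence step there needs a uniform-integrability or localization argument as in the cited references.
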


\section{Probabilistic interpretation for HJB equations}\label{sec:ProbabilisticInterpretationHJBEquations}
In this section we will show the probabilistic interpretation for a generalized HJB equations, in viscosity sense, which are associated with stochastic recursive optimal control problems. Before proving the probabilistic interpretation, we should give a DPP for a stochastic recursive optimal control problem of the cost functional described by a controlled FBSDE system. This DPP is a well-known result and can be obtained by corresponding results in \citet*{Peng1997SuiJiFenXiXuanJiang} and \citet*{PuZhang2015arXiv}, so we omit its proof.

The set $\calU$ of admissible control processes is defined by 
\[\calU:=\big\{(v_t)_{t\in\tT}|v(\cdot)\in\calH^2(0,T;\rtn^k) \text{ and takes values in a compact set }U\subset\rtn^k\big\}.\]
For a given admissible control $v(\cdot)\in\calU$, we consider the following FBSDE system:
\begin{equation}\label{eq:ControlSystem}
\begin{cases}
  \displaystyle X^{t,x;v}_s=x+\int^s_tb(r,X^{t,x;v}_r,v_r)\dif r+\int^s_t\sigma(r,X^{t,x;v}_r,v_r)\dif B_r,\\
  \displaystyle 
  Y^{t,x;v}_s=\Phi(X^{t,x;v}_T)+\int^T_s g(r,X^{t,x;v}_r,Y^{t,x;v}_r,Z^{t,x;v}_r,v_r)\dif r-\int^T_s\langle Z^{t,x;v}_r,\dif B_r\rangle,\quad s\in[t,T],
\end{cases}
\end{equation}
where $t\in\tT$ is the initial time, $x\in\rtn^n$ is the initial state, and mappings $b:\tT\times\rtn^n\times U\mapsto\rtn^n$, $\sigma:\tT\times\rtn^n\times U\mapsto\rtn^{n\times d}$, 	$g:\tT\times\rtn^n\times\rtn\times\rtn^d\times U\mapsto\rtn$, $\Phi:\rtn^n\mapsto\rtn$ satisfy the following conditions:
\begin{enumerate}
	\renewcommand{\theenumi}{(H\arabic{enumi})}
	\renewcommand{\labelenumi}{\theenumi}
	\item\label{H:BSigmaContinuousInT} For each $x\in\rtn^n$, $y\in\rtn$, $z\in\rtn^d$ and $v\in U$, $t\mapsto b(t,x,v)$, $\sigma(t,x,v)$, $g(t,x,y,z,v)$ are continuous;
	\item\label{H:BSigmaLipschitz} There exists a constant $K\geq 0$ such that for each $x$,  $x'\in\rtn^n$, and $v$, $v'\in U$, 
	\[|b(t,x,v)-b(t,x',v')|+|\sigma(t,x,v)-\sigma(t,x',v')|\leq K(|x-x'|+|v-v'|);\]
	\item\label{H:GPhiLipschitz} There exists a constant $K\geq 0$ such that for each $x$, $x'\in\rtn^n$, $y$, $y'\in\rtn$, $z$, $z'\in\rtn^d$, $v$, $v'\in U$,
	\[|g(t,x,y,z,v)-g(t,x',y',z',v')|+|\Phi(x)-\Phi(x')|\leq K(|x-x'|+|y-y'|+|z-z'|+|v-v'|).\]
\end{enumerate}
Obviously, under the above assumptions, for any $v(\cdot)\in\calU$ the control system \eqref{eq:ControlSystem} admits a unique solution $(X^{t,x;v}_s,Y^{t,x;v}_s,Z^{t,x;v}_s)_{s\in[t,T]}$ in $\calS^2(t,T;\rtn^n\times\rtn)\times\calH^2(t,T;\rtn^d)$.

We now define the associated cost functional,
\begin{equation*}
  J(t,x;v(\cdot)):=Y^{t,x;v}_s|_{s=t},\quad (t,x)\in\tT\times\rtn^n,\;v(\cdot)\in\calU,
\end{equation*}
and define the value function of the stochastic recursive optimal control problem,
\begin{equation}\label{eq:ValueFunction}
  u(t,x):=\esssup_{v(\cdot)\in\calU}J(t,x;v(\cdot)),\quad (t,x)\in\tT\times\rtn^n.
\end{equation}
Here by standard estimates for FBSDE \eqref{eq:ControlSystem} we know that $u(t,x)$ is well defined. Moreover, $u(t,x)$ is deterministic, continuous in $(t,x)$ and of at most linear growth with respect to $x$, see \citet*{Peng1997SuiJiFenXiXuanJiang} or \citet*{PuZhang2015arXiv} for a survey. To introduce the DPP, we need the notion of backward semigroups, which is original from \citet*{Peng1997SuiJiFenXiXuanJiang}. For each $(t,x)\in\tT\times\rtn^n$, $v(\cdot)\in\calU$, $0\leq\delta\leq T-t$ and a random variable $\xi\in L^2(\Omega,\calF_{t+\delta},\PR;\rtn)$, we denote $G^{t,x;v}_{t,t+\delta}[\xi]:=Y_t$, where $(Y_s,Z_s)_{s\in[t,t+\delta]}$ is the solution of the following BSDE:
\begin{equation*}
  Y_s=\xi+\int^{t+\delta}_sg(r,X^{t,x;v}_r,Y_r,Z_r,v_r)\dif r-\int^{t+\delta}_s\langle Z_r,\dif B_r\rangle,\quad s\in[t,t+\delta].
\end{equation*}
Then for the control system \eqref{eq:ControlSystem} we have that $G^{t,x;v}_{t,T}[\Phi(X^{t,x;v}_T)]=G^{t,x;v}_{t,t+\delta}[Y^{t,x;v}_{t+\delta}]$.

\begin{thm}[DPP]\label{thm:DPP}
	Assume that \ref{H:BSigmaContinuousInT} -- \ref{H:GPhiLipschitz} hold. Then the value function $u(t,x)$ enjoys the following dynamic programming principle, for each $0\leq\delta\leq T-t$,
	\begin{equation*}
	  u(t,x)=\sup_{v(\cdot)\in\calU}G^{t,x;v}_{t,t+\delta}[u(t+\delta,X^{t,x;v}_{t+\delta})].
	\end{equation*}
\end{thm}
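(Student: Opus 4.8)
The plan is to establish the two inequalities $u(t,x)\le W(t,x)$ and $u(t,x)\ge W(t,x)$ separately, where I abbreviate the right-hand side by $W(t,x):=\sup_{v(\cdot)\in\calU}G^{t,x;v}_{t,t+\delta}[u(t+\delta,X^{t,x;v}_{t+\delta})]$. Three ingredients would be used throughout: (i) the flow/semigroup identity $G^{t,x;v}_{t,T}[\Phi(X^{t,x;v}_T)]=G^{t,x;v}_{t,t+\delta}[Y^{t,x;v}_{t+\delta}]$ recorded just above the statement; (ii) the comparison theorem for BSDEs, which makes the backward semigroup $\xi\mapsto G^{t,x;v}_{t,t+\delta}[\xi]$ monotone and Lipschitz in its terminal datum; and (iii) the Markov/flow property of the controlled forward equation \eqref{eq:ControlSystem}, so that restarting at time $t+\delta$ from the random state $X^{t,x;v}_{t+\delta}$ reproduces the solution on $[t+\delta,T]$, giving $Y^{t,x;v}_{t+\delta}=J(t+\delta,X^{t,x;v}_{t+\delta};v)$.

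For $u\le W$ I would fix an arbitrary $v(\cdot)\in\calU$ and chain these facts. By (i), $J(t,x;v(\cdot))=Y^{t,x;v}_t=G^{t,x;v}_{t,t+\delta}[Y^{t,x;v}_{t+\delta}]$; by (iii) the terminal datum equals $J(t+\delta,X^{t,x;v}_{t+\delta};v)$, which is at most $u(t+\delta,X^{t,x;v}_{t+\delta})$ by the very definition \eqref{eq:ValueFunction}; finally (ii) upgrades this pointwise bound on the terminal data to $Y^{t,x;v}_t\le G^{t,x;v}_{t,t+\delta}[u(t+\delta,X^{t,x;v}_{t+\delta})]\le W(t,x)$. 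Passing to the essential supremum over $v(\cdot)$ yields $u(t,x)\le W(t,x)$; this direction is routine.

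The reverse inequality $u\ge W$ is the heart of the matter. I would fix $\vp>0$ and an arbitrary $v(\cdot)$, and paste together a single admissible control $\bar v(\cdot)$ that coincides with $v(\cdot)$ on $[t,t+\delta]$ and, on $[t+\delta,T]$, follows a control that is $\vp$-optimal for the subproblem started at $(t+\delta,X^{t,x;v}_{t+\delta})$. The flow identity (i) applied to $\bar v$ then gives $u(t,x)\ge Y^{t,x;\bar v}_t=G^{t,x;v}_{t,t+\delta}[Y^{t+\delta,X^{t,x;v}_{t+\delta};\bar v}_{t+\delta}]$, whose terminal datum is, by construction, at least $u(t+\delta,X^{t,x;v}_{t+\delta})-\vp$. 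Combining the monotonicity and Lipschitz continuity in (ii) with the a priori estimates following from \ref{H:GPhiLipschitz} produces $u(t,x)\ge G^{t,x;v}_{t,t+\delta}[u(t+\delta,X^{t,x;v}_{t+\delta})]-C\vp$ for a constant $C$ independent of $v$ and $\vp$; supremizing over $v(\cdot)$ and letting $\vp\to0^+$ closes the gap.

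The main obstacle lies in constructing $\bar v(\cdot)$: the $\vp$-optimal control for the restarted problem depends on the random endpoint $X^{t,x;v}_{t+\delta}$, so the definition of $u(t+\delta,\cdot)$ cannot be invoked pointwise and an $\calF_{t+\delta}$-measurable selection is needed. The remedy I would follow exploits the continuity and linear growth of $u$ together with the Lipschitz structure \ref{H:BSigmaLipschitz}--\ref{H:GPhiLipschitz}: partition $\rtn^n$ into small Borel cells $\{O_i\}$, fix a point $x_i\in O_i$ and a deterministic $\vp$-optimal control $v^i(\cdot)$ for $(t+\delta,x_i)$, and set $\bar v=v$ on $[t,t+\delta]$ and $\bar v=\sum_i\one{\{X^{t,x;v}_{t+\delta}\in O_i\}}v^i$ on $[t+\delta,T]$. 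Controlling the error from replacing $X^{t,x;v}_{t+\delta}$ by $x_i$ inside each cell, via the uniform continuity estimates for $J$ and $u$, is the only genuinely technical point; everything else is the semigroup bookkeeping above. Since this selection argument is exactly the one carried out in \citet*{Peng1997SuiJiFenXiXuanJiang} and \citet*{PuZhang2015arXiv}, I would cite those works for the measurable-selection details rather than reproduce them.
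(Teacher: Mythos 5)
The paper omits the proof of this DPP entirely, deferring to the cited works of Peng (1997) and Pu--Zhang (2015), and your sketch reproduces precisely the standard argument from those references (easy inequality via the flow identity, the Markov property and comparison; hard inequality via pasting an $\vp$-optimal control through an $\calF_{t+\delta}$-measurable selection over a Borel partition of $\rtn^n$), so it is correct and matches the route the paper relies on. The one point you understate is that already in the ``easy'' direction the bound $J(t+\delta,X^{t,x;v}_{t+\delta};v)\le u(t+\delta,X^{t,x;v}_{t+\delta})$ cannot be read off ``by the very definition'' of $u$, because $u(t+\delta,\cdot)$ is defined as an essential supremum at a \emph{deterministic} initial state while here the state is random; the same discretization of $X^{t,x;v}_{t+\delta}$ into cells (together with the continuity and linear growth of $u$ and the $L^2$-stability of the backward semigroup) that you invoke for the reverse inequality is needed there as well.
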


Next we will relate the value function \eqref{eq:ValueFunction} with the following generalized HJB equation, which is a fully nonlinear second order PDE of parabolic type: 
\begin{equation}\label{eq:HJBEquation}
  \begin{cases}
    \partial_t u(t,x)+\sup_{v\in U}\left\{\calL^v_tu(t,x)+g(t,x,u(t,x),\sigma^*(t,x,v)\nabla u(t,x),v)\right\}=0,\\
    u(T,x)=\Phi(x),\\
  \end{cases}
\end{equation}
where $\calL^v_t$ is a family of second order  partial differential operators, 
\[\calL^v_t u=\frac{1}{2}Tr\{(\sigma\sigma^*)(t,x,v)D^2u\}+\langle b(t,x,v),\nabla u\rangle.\]
We would like to prove the value function $u(t,x)$ defined in \eqref{eq:ValueFunction} is a viscosity solution of HJB equation \eqref{eq:HJBEquation}. We first recall the notion of viscosity solution for \eqref{eq:HJBEquation}, which is adapted from \citet*{CrandallIshiiLions1992BAMS} and \citet*{Peng1997SuiJiFenXiXuanJiang}. 
\begin{dfn}\label{dfn:ViscositySolutionsOfHJBEquations}
	A function $u\in C(\tT\times\rtn^n;\rtn)$ is called a viscosity subsolution (resp. supersolution) of HJB equation \eqref{eq:HJBEquation}, if $u(T,x)\leq\Phi(x)$ (resp. $u(T,x)\geq\Phi(x)$) for all $x\in\rtn^n$, and for any $\varphi\in C^{1,2}_b(\tT\times\rtn^n;\rtn)$ such that whenever $(t,x)\in[0,T)\times\rtn^n$ is a local minimum (resp. maximum) point of $\varphi-u$, then
	\begin{equation*}
	  \partial_t\varphi(t,x)+\sup_{v\in U}\left\{\calL^v_t\varphi(t,x)+g(t,x,u(t,x),\sigma^*(t,x,v)\nabla\varphi(t,x),v)\right\}\geq \text{ (resp. }\leq \text{) } 0.
	\end{equation*}
	A function $u\in C(\tT\times\rtn^n;\rtn)$ is called a viscosity solution of \eqref{eq:HJBEquation} if it is both a viscosity subsolution and a viscosity supersolution.
\end{dfn}

\begin{thm}\label{thm:ProbabilisticInterpretationForHJBEquations}
	Let assumptions \ref{H:BSigmaContinuousInT} -- \ref{H:GPhiLipschitz} hold. Then the value function $u(t,x)$ defined by \eqref{eq:ValueFunction} is a viscosity solution of HJB equation \eqref{eq:HJBEquation}.
\end{thm}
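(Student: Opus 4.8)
The plan is to check the three requirements of \cref{dfn:ViscositySolutionsOfHJBEquations}. The terminal condition is immediate: since $X^{T,x;v}_T=x$ forces $Y^{T,x;v}_T=\Phi(x)$ for every $v(\cdot)\in\calU$, we get $J(T,x;v(\cdot))=\Phi(x)$ and hence $u(T,x)=\Phi(x)$; the continuity of $u$ has already been recorded below \eqref{eq:ValueFunction}. For the two variational inequalities the whole idea is to recognise the expression appearing in \cref{dfn:ViscositySolutionsOfHJBEquations} as the generator of an auxiliary BSDE and to read off its sign from the representation theorem.

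Fix $(t,x)\in[0,T)\times\rtn^n$ and $\varphi\in C^{1,2}_b(\tT\times\rtn^n;\rtn)$, and for $v(\cdot)\in\calU$, $0<\delta\le T-t$ set $\bar Y^v_s:=G^{t,x;v}_{s,t+\delta}[\varphi(t+\delta,X^{t,x;v}_{t+\delta})]-\varphi(s,X^{t,x;v}_s)$. Applying It\^o's formula to $\varphi(s,X^{t,x;v}_s)$ and subtracting it from the BSDE defining the backward semigroup, one finds that $(\bar Y^v_s,\bar Z^v_s)_{s\in[t,t+\delta]}$ solves a BSDE with terminal value $0$ and generator
\[\bar g^v(r,\bar y,\bar z)=\partial_r\varphi+\calL^{v_r}_r\varphi+g\big(r,X^{t,x;v}_r,\bar y+\varphi,\bar z+\sigma^*\nabla\varphi,v_r\big),\]
all derivatives of $\varphi$ and $\sigma^*$ being evaluated at $(r,X^{t,x;v}_r)$. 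Since $\varphi\in C^{1,2}_b$ and $g,\sigma,b$ are Lipschitz of linear growth, $\bar g^v$ satisfies \ref{A:GSquareIntegrable}--\ref{A:GLipschitz} with constants independent of $v(\cdot)$, and, writing $\hat g_v:=\bar g^v(t,0,0)=\partial_t\varphi(t,x)+\calL^v_t\varphi(t,x)+g(t,x,u(t,x),\sigma^*(t,x,v)\nabla\varphi(t,x),v)$ after the normalisation $\varphi(t,x)=u(t,x)$, one sees that $\sup_{v\in U}\hat g_v$ is exactly the left-hand side of the inequality in \cref{dfn:ViscositySolutionsOfHJBEquations}. Applying \cref{thm:RepresentationTheoremForGenerators} in the form \eqref{eq:RepThmWithoutLebesgue} with $y=z=0$ to $\bar g^v$ at the initial time $t$, and using that $\bar Y^v_t$ is $\calF_t$-measurable together with the time-continuity of the coefficients and $X^{t,x;v}_t=x$, yields for every constant control $v(\cdot)\equiv v\in U$ the key identity $\tfrac1\delta\bar Y^v_t\to\hat g_v$ as $\delta\to0^+$. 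This is the device that transmits the sign of the solution $\bar Y^v_t$ to the sign of the generator $\hat g_v$.

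For the supersolution inequality let $(t,x)$ be a local maximum of $\varphi-u$, normalised so that $\varphi(t,x)=u(t,x)$ and $\varphi\le u$ near $(t,x)$. For small $\delta$ the comparison theorem for BSDEs and \cref{thm:DPP} give $\varphi(t,x)=u(t,x)=\sup_{v}G^{t,x;v}_{t,t+\delta}[u(t+\delta,X^{t,x;v}_{t+\delta})]\ge\sup_{v}G^{t,x;v}_{t,t+\delta}[\varphi(t+\delta,X^{t,x;v}_{t+\delta})]$, so $\bar Y^v_t\le0$ for every $v(\cdot)$. Freezing a constant control $v\in U$ and letting $\delta\to0^+$ in the key identity gives $\hat g_v\le0$, and taking the supremum over $v\in U$ is precisely the required inequality. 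The subsolution inequality is dual: at a (normalised) local minimum one has $\varphi\ge u$ near $(t,x)$, hence $\sup_v\bar Y^v_t\ge0$, i.e. $\sup_{v(\cdot)\in\calU}\tfrac1\delta\bar Y^v_t\ge0$ for all $\delta$. Bounding $\bar g^v(r,0,0)\le H(r,X^{t,x;v}_r)$ with $H(r,\cdot):=\sup_{w\in U}[\partial_r\varphi+\calL^w_r\varphi+g(r,\cdot,\varphi,\sigma^*\nabla\varphi,w)]$ continuous, and using the representation identity, one obtains $\tfrac1\delta\bar Y^v_t\le H(t,x)+o(1)$ uniformly in $v(\cdot)$; passing to the limit gives $H(t,x)=\partial_t\varphi(t,x)+\sup_{v\in U}\{\calL^v_t\varphi(t,x)+g(t,x,u(t,x),\sigma^*(t,x,v)\nabla\varphi(t,x),v)\}\ge0$.

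The main obstacle is concentrated in the subsolution step, for two reasons. First, the bound $\tfrac1\delta\bar Y^v_t\le H(t,x)+o(1)$ is needed uniformly over the entire family $\{v(\cdot)\in\calU\}$ rather than for a single generator, so one must invoke the quantitative estimates underlying \cref{thm:RepresentationTheoremForGenerators}—the error in \eqref{eq:RepThmWithoutLebesgue} being controlled by the common Lipschitz constant and a uniform $\calH^2$-bound on $\bar g^v(\cdot,0,0)$—rather than its bare statement. Second, the comparison steps $G^{t,x;v}[u]\le G^{t,x;v}[\varphi]$ (and its reverse for the supersolution) are only licit where $\varphi\ge u$ (resp. $\varphi\le u$), whereas $X^{t,x;v}_{t+\delta}$ may leave that neighbourhood; since $\varphi$ is bounded and $u$ grows at most linearly, I would absorb this discrepancy through the uniform exit estimate $\PR(\sup_{s\in[t,t+\delta]}|X^{t,x;v}_s-x|>\rho)=o(\delta)$, whose contribution disappears after division by $\delta$. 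Once these two uniformities are secured, the supremum over controls and the limit $\delta\to0^+$ may be interchanged and the argument closes.
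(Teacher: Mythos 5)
Your overall architecture is the paper's: recognise the expression in \cref{dfn:ViscositySolutionsOfHJBEquations} as the generator of the auxiliary BSDE obtained by subtracting It\^o's formula for $\varphi(\cdot,X^{t,x;v}_\cdot)$ from the backward semigroup, and read off its sign via the representation theorem. Your supersolution argument coincides with the paper's essentially verbatim (freeze a constant control, apply \eqref{eq:RepThmWithExplicitGenerator}, pass to the limit). The terminal condition and the localisation issue for the comparison step are handled the same way in spirit; the paper simply suppresses the latter, and your exit-probability remark is a legitimate way to make it explicit.

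Where you genuinely diverge is the subsolution step, and this is worth comparing. The paper does \emph{not} try to make the representation-theorem error uniform over the uncountable family $\{v(\cdot)\in\calU\}$. Instead it extracts, for each $\delta$ and $\vp$, a single $\vp$-optimal control $v^{\vp}(\cdot)=\sum_i\one{\Gamma_i}v^i(\cdot)$ by patching a countable maximising sequence along an $\calF_t$-measurable partition, so that $\hat Y^{v^{\vp},\delta}_t\geq-\delta\vp$ a.s.; it then argues by contradiction, applying \eqref{eq:RepThmWithoutLebesgue} to that one (non-constant) control and replacing $X^{t,x;v^\vp}_r$ by $x$ via the moment estimates. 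Your route instead majorises $\bar g^v(r,0,0)$ by $H(r,X^{t,x;v}_r)$ and pushes a uniform-in-$v$ error bound through the representation theorem. This can be made to work, but it is the one load-bearing step that the bare statement of \cref{thm:RepresentationTheoremForGenerators} does not supply: the theorem gives an $L^p$ limit for each fixed generator, whereas you need the \emph{essential supremum over $v(\cdot)$} of the error to vanish, i.e.\ an almost-sure rate (of order $\delta^{1/2}$, say) with constants depending only on the common Lipschitz constant and conditional moment bounds for $X^{t,x;v}$. You correctly identify this and sketch why it should hold, but as written it is asserted rather than proved, and it requires reopening the proof of the representation theorem rather than citing it. The paper's $\vp$-optimal-control device is precisely what lets one use the theorem as a black box on a single control; your version, if the uniform estimate is written out, buys a direct (non-contradiction) argument and avoids the measurable patching, at the cost of a quantitative strengthening of \cref{thm:RepresentationTheoremForGenerators}. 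Either way the conclusion is the same, so I would call this a correct proposal with one step left at the level of a plausible claim.
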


\begin{proof}
	Note that $u(t,x)$ is continuous in $(t,x)$. We first prove that $u$ is a viscosity supersolution. Take any $\varphi\in C^{1,2}_b(\tT\times\rtn^n;\rtn)$, $(t,x)\in[0,T)\times\rtn^n$ such that $\varphi-u$ achieves the local maximum $0$ at $(t,x)$. Without loss of generality, we assume $u(t,x)=\varphi(t,x)$. Since $u(T,x)=\Phi(x)$ holds for all $x\in\rtn^n$, it reduces to prove that	
  \begin{equation}\label{eq:ViscositySupersoultionInequality}
    \partial_t\varphi(t,x)+\sup_{v\in U}\left\{\calL^v_t\varphi(t,x)+g(t,x,u(t,x),\sigma^*(t,x,v)\nabla\varphi(t,x),v)\right\}\leq 0.
  \end{equation}		
  It follows from \cref{thm:DPP} that for each $0<\delta\leq T-t$,
  \[\varphi(t,x)=u(t,x)=\sup_{v(\cdot)\in\calU}G^{t,x;v}_{t,t+\delta}[u(t+\delta,X^{t,x;v}_{t+\delta})].\]
  The fact that $\varphi\leq u$ and the monotonicity of backward semigroup $G$ (or the comparison theorem for solutions of BSDEs, Theorem 2.2 in \citet*{ElKarouiPengQuenez1997MF}) yield that
  \begin{equation}\label{eq:SemigroupMinusVarphiLeqZero}
  \sup_{v(\cdot)\in\calU}\left\{G^{t,x;v}_{t,t+\delta}[\varphi(t+\delta,X^{t,x;v}_{t+\delta})]-\varphi(t,x)\right\}\leq 0.
  \end{equation}
  For each $v(\cdot)\in\calU$, we set $Y^{v,\delta}_t:=G^{t,x;v}_{t,t+\delta}[\varphi(t+\delta,X^{t,x;v}_{t+\delta})]$, which is a solution of the following BSDE,
  \[Y^{v,\delta}_t=\varphi(t+\delta,X^{t,x;v}_{t+\delta})+\int^{t+\delta}_tg(r,X^{t,x;v}_r,Y^{v,\delta}_r,Z^{v,\delta}_r,v_r)\dif r-\int^{t+\delta}_t\langle Z^{v,\delta}_r,\dif B_r\rangle.\]
  It\^o's formula to $\varphi(r,X^{t,x;v}_r)$ at the time interval $[t,t+\delta]$ reads 
  \begin{align*}
  \varphi(t,x)={}&\varphi(t+\delta,X^{t,x;v}_{t+\delta})-\int^{t+\delta}_t\left[\partial_r\varphi(r,X^{t,x;v}_r)+\calL^v_r\varphi(r,X^{t,x;v}_r)\right]\dif r\\
  &-\int^{t+\delta}_t\langle \sigma^*(r,X^{t,x;v}_r,v_r)\nabla\varphi(r,X^{t,x;v}_r),\dif B_r\rangle.
  \end{align*}
  Hence, we deduce that, setting $\hat Y^{v,\delta}_\cdot:=Y^{v,\delta}_\cdot-\varphi(\cdot,X^{t,x;v}_\cdot)$, $\hat Z^{v,\delta}_\cdot:=Z^{v,\delta}_\cdot-\sigma^*(\cdot,X^{t,x;v}_\cdot,v_\cdot)\nabla\varphi(\cdot,X^{t,x;v}_\cdot)$,
  \begin{equation}
  \hat Y^{v,\delta}_t=\int^{t+\delta}_tF(r,X^{t,x;v}_r,\hat Y^{v,\delta}_r,\hat Z^{v,\delta}_r,v_r)\dif r-\int^{t+\delta}_t\langle \hat Z^{v,\delta}_r,\dif B_r\rangle,\label{eq:BSDEWithCalLPlusG}
  \end{equation}
  where for each $r\in[t,t+\delta]$, $x\in\rtn^n$, $y\in\rtn$, $z\in\rtn^d$ and $v\in U$,
  \begin{align*}
  F(r,x,y,z,v):=\partial_r\varphi(r,x)+\calL^v_r\varphi(r,x)+g(r,x,y+\varphi(r,x),z+\sigma^*(r,x,v)\nabla\varphi(r,x),v).
  \end{align*}
  Now by \eqref{eq:SemigroupMinusVarphiLeqZero} we have that for each $0<\delta\leq T-t$, $\sup_{v(\cdot)\in\calU}\hat Y^{v,\delta}_t\leq 0$. Hence, we deduce that $\hat Y^{v',\delta}_t\leq 0$ holds for each $v'\in U$. It is evident that \ref{A:GSquareIntegrable} and \ref{A:GLipschitz} hold true for $F(r,X^{t,x;v}_r,y,z,v_r)$ since $g$ satisfies \ref{H:BSigmaContinuousInT} and \ref{H:GPhiLipschitz}. Then \eqref{eq:RepThmWithExplicitGenerator} in \cref{thm:RepresentationTheoremForGenerators} implies that for each $v'\in U$,
  \begin{equation*}
    F(t,x,0,0,v')=\lim_{\delta\to0}\frac{\hat Y^{v',\delta}_t}{\delta}\leq 0.
  \end{equation*}
  Thereby, we know that \eqref{eq:ViscositySupersoultionInequality} holds by the definition of $F$.

  Finally, we prove that $u$ is a viscosity subsolution. Take any $\varphi\in C^{1,2}_b(\tT\times\rtn^n;\rtn)$ such that $\varphi-u$ achieves the local minimum $0$ at $(t,x)\in[0,T)\times\rtn^n$. We only need to prove that	
  \begin{equation}\label{eq:ViscositySubsoultionInequality}
    \partial_t\varphi(t,x)+\sup_{v\in U}\left\{\calL^v_t\varphi(t,x)+g(t,x,u(t,x),\sigma^*(t,x,v)\nabla\varphi(t,x),v)\right\}\geq 0.
  \end{equation}	
  Analogous to the previous arguments, we will get that
  \begin{equation}\label{eq:SemigroupMinusVarphiGeqZero}
    \sup_{v(\cdot)\in\calU}\left\{G^{t,x;v}_{t,t+\delta}[\varphi(t+\delta,X^{t,x;v}_{t+\delta})]-\varphi(t,x)\right\}\geq 0,
  \end{equation}
  and BSDE \eqref{eq:BSDEWithCalLPlusG} still holds. Now we know that $\sup_{v(\cdot)\in\calU}\hat Y^{v,\delta}_t\geq 0$ for each $0<\delta\leq T-t$. Thus, there exists a sequence $\{v^i(\cdot)\}_{i\geq 1}\subset\calU$ such that, $\prs$, $\sup_{v(\cdot)\in\calU}\hat Y^{v,\delta}_t=\sup_{i\geq 1}\hat Y^{v^i,\delta}_t$. For each $0<\delta\leq T-t$ and $\vp>0$, we define
  \[\widetilde \Gamma_i:=\bigg\{\sup_{v(\cdot)\in\calU}\hat Y^{v,\delta}_t\leq \hat Y^{v^i,\delta}_t+\delta\vp\bigg\}\in\calF_t,\quad i\geq 1.\]
  Then the events $\Gamma_1:=\widetilde \Gamma_1$, $\widetilde \Gamma_i/(\cup^{i-1}_{j=1}\widetilde\Gamma_j)\in\calF_t$, $i\geq 2$ are mutually disjoint and form a $(\Omega,\calF_t)$-partition. Obviously, we have that $v^\vp(\cdot):=\sum_{i\geq 1}\one{\Gamma_i}v^i(\cdot)\in\calU$. And from the uniqueness for solutions of BSDEs, we konw that $\prs$, $\hat Y^{v^\vp,\delta}_t=\sum_{i\geq 1}\one{\Gamma_i}\hat Y^{v^i,\delta}_t$. Hence, we conclude that, $\prs$,
  \begin{equation*}
    \hat Y^{v^\vp,\delta}_t=\sum_{i\geq 1}\one{\Gamma_i}\hat Y^{v^i,\delta}_t\geq\sup_{v(\cdot)\in\calU}\hat Y^{v,\delta}_t-\delta\vp\geq-\delta\vp.
  \end{equation*}
  We now suppose that \eqref{eq:ViscositySubsoultionInequality} does not hold. Then by the definition of $F$, there exists a $\vp_0>0$ such that $F(t,x,0,0,v)<-\vp_0$ for each $v\in U$. Since $F(\cdot,x,0,0,\cdot)$ is uniformly continuous, when $\delta$ is small enough we derive that $F(r,x,0,0,v)<-\vp_0/2$ for each $r\in[t,t+\delta]$ and $v\in U$. Applying \eqref{eq:RepThmWithoutLebesgue} in \cref{thm:RepresentationTheoremForGenerators} we have that for each $t\in\tT$,
  \begin{equation*}
    \lim_{\delta\to0}\frac{1}{\delta}
    \EX\bigg[\bigg|\hat Y^{v^\vp,\delta}_t-\int^{t+\delta}_tF(r,X^{t,x;v^\vp}_r,0,0,v^\vp_r)\dif r\bigg|\bigg]=0.
  \end{equation*}
  Then, for $\vp_0/4>0$, there exists a small enough $\delta>0$ such that
  \begin{equation*}
    \frac{1}{\delta}\EX[\hat Y^{v^\vp,\delta}_t]\leq \frac{1}{\delta}\EX\bigg[\int^{t+\delta}_tF(r,X^{t,x;v^\vp}_r,0,0,v^\vp_r)\dif r\bigg]+\frac{\vp_0}{4}.
  \end{equation*}
  Noticing that $\hat Y^{v^\vp,\delta}_t\geq-\delta\vp$ and the following two estimates, where $C\geq 0$ and $p\geq 1$ are two constants,
  \begin{gather*}
    |F(r,X^{t,x;v^\vp}_r,0,0,v^\vp_r)-F(r,x,0,0,v^\vp_r)|\leq C(1+|x|^2)(|X^{t,x;v^\vp}_r-x|+|X^{t,x;v^\vp}_r-x|^3),\\
    \EX\bigg[\sup_{r\in[t,t+\delta]}|X^{t,x;v^\vp}_r-x|^p\bigg]\leq C\delta^{p/2},
  \end{gather*}
  we can take $\delta\to0$ and then combine with $\prs$, $F(r,x,0,0,v^\vp_r)<-\vp_0/2$, obtaining that 
  \begin{align*}
  -\vp\leq 
  \lim_{\delta\to0}\frac{1}{\delta}\EX\bigg[\int^{t+\delta}_tF(r,x,0,0,v^\vp_r)\dif r\bigg]\leq \frac{\vp_0}{4}-\frac{\vp_0}{2}=-\frac{\vp_0}{4}.
  \end{align*}
  Hence, when $\vp=\vp_0/6$ it will contradict with $\vp_0>0$. So the inequality \eqref{eq:ViscositySubsoultionInequality} holds. Therefore, the proof of  \cref{thm:ProbabilisticInterpretationForHJBEquations} is finished. 
\end{proof}

\begin{rmk}
	(i) From the proof procedure of \cref{thm:ProbabilisticInterpretationForHJBEquations}, we can observe that if the DPP holds true, the probabilistic interpretation for HJB equations can be proved as soon as the representation theorem for generators of BSDE \eqref{eq:BSDEWithCalLPlusG} holds true, where such theorem is determined by the conditions for the generator $g$ of the BSDE in \eqref{eq:ControlSystem}. So our method also applies to the cases of \citet*{BuckdahnLi2008SICON} and \citet*{PuZhang2015arXiv}. Moreover, the additional assumption, adopted in \citet*{PuZhang2015arXiv}, that $g(t,x,y,z)$ is independent of $z$ can be eliminated naturally by the representation theorem in \citet*{FanJiangXu2011EJP}. 

  (ii) The representation theorem for generators of BSDEs can also be applied to prove the probabilistic interpretation for semilinear (or quasilinear) second order PDEs of both elliptic and parabolic types, just omit the control process $v(\cdot)$ in \eqref{eq:HJBEquation}. So the representation theorem method can be regarded as a unified approach to the probabilistic interpretation for semilinear, quasilinear and HJB type PDEs.
\end{rmk}

\section*{Acknowledgements}
The first author expresses his gratitude to Dr. Yu Zhuo (School of Mathematical Sciences, Fudan University) for her many helpful discussions that lead to the improved version of this note.

\end{document}